\newtheorem{theorem}{Theorem}
\theoremstyle{plain}
\newtheorem{lemma}{Lemma}
\newtheorem{proposition}{Proposition}
\numberwithin{equation}{section}
\begin{document}
\title[Isoperimetric Constant]{On the Isoperimetric Constant of Symmetric Spaces of Noncompact Type}
\author{Xiaodong Wang}
\address{Department of Mathematics, Michigan State University, East Lansing, MI 48824}
\email{xwang@math.msu.edu}
\maketitle

\bigskip Let $N^{n}$ be a complete, noncompact Riemannian manifold. We
consider the isoperimetric constant $I\left(  N\right)  $ defined by%
\[
I\left(  N\right)  =\inf_{\Omega}\frac{A\left(  \partial\Omega\right)
}{V\left(  \Omega\right)  },
\]
where $\Omega$ ranges over open submanifolds of $N$ with compact closure and
smooth boundary. This is also called Cheeger's constant \cite{Ch}. The
importance of this global geometric invariant is illustrated by the following
fundamental inequality relating it to another global analytic invariant
\begin{equation}
\lambda_{0}\left(  N\right)  \geq I\left(  N\right)  ^{2}/4, \label{Cheeger}%
\end{equation}
where $\lambda_{0}$ is the bottom of spectrum of the Laplace operator on $N$.
It is a well known fact that $\lambda_{0}$ can be characterized as%
\begin{equation}
\lambda_{0}=\inf\frac{\int_{N}\left\vert \nabla f\right\vert ^{2}}{\int
_{N}\left\vert f\right\vert ^{2}}, \label{lam0}%
\end{equation}
where $f$ ranges over nonzero $C^{1}$ functions with compact support.

For $\mathbb{R}^{n}$ or any Riemannian manifold with nonnegative Ricci
curvature, the isoperimetric constant $I$ is trivial. On the other hand, for
Cartan-Hadamard manifolds with sectional curvature bounded by a negative
constant from above, Yau proved that $I$ is always positive.

\begin{proposition}
(Yau \cite{Y}) If $N$ is simply connected with sectional curvatures
$\leq\kappa<0$, then
\[
I\left(  N\right)  \geq\left(  n-1\right)  \sqrt{-k}.
\]

\end{proposition}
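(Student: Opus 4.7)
The plan is to exploit the distance function from a fixed point as a test vector field in the divergence theorem, using the Hessian comparison theorem to control its Laplacian from below.

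First, I would pick a base point $p \in N$ and set $r(x) = d(p,x)$. Since $N$ is a Cartan--Hadamard manifold, $\exp_p : T_p N \to N$ is a diffeomorphism, so $r$ is smooth on $N \setminus \{p\}$ and $|\nabla r| \equiv 1$ there. The key input is the Laplacian comparison theorem: since the sectional curvatures satisfy $K \leq \kappa < 0$, comparison with the model space of constant curvature $\kappa$ yields
\[
\Delta r \;\geq\; (n-1)\sqrt{-\kappa}\,\coth\!\bigl(\sqrt{-\kappa}\,r\bigr) \;\geq\; (n-1)\sqrt{-\kappa},
\]
the last inequality because $\coth \geq 1$ on $(0,\infty)$.

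Next, given any $\Omega$ with compact closure and smooth boundary, I would apply the divergence theorem to the unit vector field $X = \nabla r$ on $\Omega$. If $p \notin \overline{\Omega}$ (which I may always arrange, since $N$ is noncompact and $\Omega$ has compact closure, by choosing $p$ far enough away) then $X$ is smooth on a neighborhood of $\overline{\Omega}$ and
\[
(n-1)\sqrt{-\kappa}\,V(\Omega) \;\leq\; \int_{\Omega} \Delta r \;=\; \int_{\partial\Omega} \langle \nabla r,\nu\rangle \;\leq\; \int_{\partial\Omega} 1 \;=\; A(\partial\Omega),
\]
since $|\nabla r|=1$ forces $|\langle\nabla r,\nu\rangle|\leq 1$. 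Dividing by $V(\Omega)$ and taking the infimum over $\Omega$ yields $I(N) \geq (n-1)\sqrt{-\kappa}$.

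The only subtle point, and what I would identify as the main obstacle, is justifying the Laplacian lower bound and the divergence-theorem step. The Laplacian bound relies on the standard Hessian comparison theorem, which on a Cartan--Hadamard manifold holds globally (no cut locus issues, since $\exp_p$ is a diffeomorphism). The divergence-theorem step is clean as soon as one ensures $p \notin \overline{\Omega}$; alternatively one can allow $p \in \Omega$, excise a small geodesic ball $B_\varepsilon(p)$, apply divergence on $\Omega \setminus \overline{B_\varepsilon(p)}$, and let $\varepsilon \to 0$, noting that the boundary contribution on $\partial B_\varepsilon(p)$ is bounded by $A(\partial B_\varepsilon(p)) \to 0$. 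Either route avoids the mild singularity of $r$ at $p$ and completes the argument.
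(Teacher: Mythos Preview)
The paper does not actually prove this proposition; it is stated as a cited result of Yau and used only as background. Your argument is correct and is essentially Yau's original proof: distance function plus Laplacian comparison plus the divergence theorem.

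It is worth noting the kinship with the paper's own method for the main result. In Lemma~\ref{low} the paper runs the same divergence-theorem trick, but with the Busemann function $b_\xi$ in place of $r$. The Busemann function is the $C^2$ limit of $d(\cdot,p_k)-k$, and passing to that limit turns your inequality $\Delta r \geq (n-1)\sqrt{-\kappa}\coth(\sqrt{-\kappa}\,r)$ into an exact identity $\Delta b_\xi = \text{const}$; on a symmetric space that constant is computed explicitly in terms of roots. So your distance-function approach and the paper's Busemann-function approach are two incarnations of the same idea, the latter being the asymptotic version of the former.
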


From this result one can easily deduce $I\left(  \mathbb{H}^{n}\right)  =n-1$.
For a detailed discussion of Cheeger's constant and related results, one can
consult \cite[Chapter 6]{Cha}. In general, it is very difficult to know if the
isoperimetric constant is positive or not and it is almost impossible to
compute it explicitly if it is known to be positive. In this short note, we
prove that the isoperimetric constant is positive for all symmetric spaces of
noncompact type and compute it explicitly.

Let $\left(  \widetilde{M}^{n},g\right)  $ be a Cartan-Hadamard manifold (i.e.
complete, simply-connected with nonpositive curvature) and $\mathcal{S}%
\widetilde{M}$ its unit tangent bundle. For any $p\in M$ and $u\in
\mathcal{S}_{p}\widetilde{M}$ we have a nonnegative symmetric operator
$R_{u}:T_{p}\widetilde{M}\rightarrow T_{p}\widetilde{M}$ defined by
\[
R_{u}\left(  X\right)  =-R\left(  u,X\right)  u.
\]
Let $0=\lambda_{0}\left(  u\right)  \leq\lambda_{2}\left(  u\right)
\leq\cdots\leq\lambda_{n-1}\left(  u\right)  $ be its eigenvalues. In this way
we have $n$ continuous functions $\lambda_{0},\cdots,\lambda_{n-1}$ on
$\mathcal{S}\widetilde{M}$. Obviously $\lambda_{i}\left(  -u\right)
=\lambda_{i}\left(  u\right)  $.

From now on we assume $\widetilde{M}$ is a symmetric space of noncompact type.
By this we mean that $\widetilde{M}$ is a Cartan-Hadamard manifold with
parallel curvature tensor and there is no Euclidean factor in its de Rham
decompostion. When $\widetilde{M}$ has rank one, it is negatively curved. But
if the rank is higher, its sectional curvature vanishes on certain $2$-planes.
A standard reference on symmetric spaces is Helgason \cite{H}.

We fix a base point $o\in\widetilde{M}$. For $\xi\in\mathcal{S}_{o}%
\widetilde{M}$ let $\gamma_{\xi}$ be the geodesic ray with initial velocity
$\xi$. We can choose an orthonormal basis $\left\{  e_{1},\cdots
,e_{n-1}\right\}  $ for $\xi^{\perp}$ s.t.
\[
R_{\xi}e_{i}=-R\left(  \xi,e_{i}\right)  \xi=\lambda_{i}\left(  \xi\right)
e_{i}.
\]
Let $E_{i}$ be the parallel vector field along $\gamma_{\xi}$ with
$E_{i}\left(  0\right)  =e_{i}$. Since the curvature tensor is parallel, we
have along $\gamma_{\xi}$
\[
R_{\gamma_{\xi}^{\prime}\left(  t\right)  }E_{i}\left(  t\right)  =-R\left(
\gamma_{\xi}^{\prime}\left(  t\right)  ,E_{i}\left(  t\right)  \right)
\gamma_{\xi}^{\prime}\left(  t\right)  =\lambda_{i}\left(  \xi\right)
E_{i}\left(  t\right)  .
\]
Therefore $\lambda_{i}\left(  \gamma_{\xi}^{\prime}\left(  t\right)  \right)
=\lambda_{i}\left(  \xi\right)  $. This proves that the $n$ continuous
functions $\lambda_{0},\cdots,\lambda_{n-1}$ on $\mathcal{S}\widetilde{M}$ are
invariant under the geodesic flow.

Along the geodesic $\gamma=$ $\gamma_{\xi}$ the Jacobi field equation%
\[
X^{\prime\prime}\left(  t\right)  +R\left(  \gamma^{\prime},X\right)
\gamma^{\prime}=0.
\]
can be explicitly solved. The solution satisfying the initial condition
$X\left(  0\right)  =0,X^{\prime}\left(  0\right)  =e_{i}$ is given by
\begin{equation}
X_{i}\left(  t\right)  =\frac{\sinh\sqrt{\lambda_{i}\left(  \xi\right)  }%
t}{\sqrt{\lambda_{i}\left(  \xi\right)  }}E_{i}\left(  t\right)  .
\label{Jacobi}%
\end{equation}

For any integer $k\geq1$, define the function $b_{k}$ on $\widetilde{M}$ by
$b_{k}\left(  x\right)  =d\left(  x,p_{k}\right)  -k$, where $p_{k}%
=\gamma_{\xi}\left(  k\right)  $. The Busemann function $b_{\xi}$ is the limit
of $b_{k}$ as $k\rightarrow\infty$, i.e.%
\[
b_{\xi}\left(  x\right)  =\lim_{k\rightarrow\infty}d\left(  x,p_{k}\right)
-k.
\]
It is well known that the limit exists. By \cite{HI}, the convergence is
locally uniform in $C^{2}\left(  \widetilde{M}\right)  $ and in particular
$b_{\xi}\in C^{2}\left(  \widetilde{M}\right)  $.

\begin{lemma}
\bigskip$\Delta b_{\xi}$ is constant and equals $l\left(  \xi\right)
:=\sum_{i}\sqrt{\lambda_{i}\left(  \xi\right)  }$.
\end{lemma}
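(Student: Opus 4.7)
The plan is to compute $\Delta b_{\xi}$ first at the base point $o$, exploiting the $C^{2}$-convergence $b_{k}\to b_{\xi}$, and then to conclude that homogeneity of the symmetric space forces $\Delta b_{\xi}$ to be constant on $\widetilde{M}$. At $o$, the functions $r_{k}(x):=d(x,p_{k})=b_{k}(x)+k$ are smooth in a neighborhood of $o$, and I would evaluate $\Delta b_{\xi}(o)=\lim_{k\to\infty}\Delta r_{k}(o)$ via the standard Jacobi-field formula for the Hessian of a distance function: if $J$ is the Jacobi field along $\gamma_{\xi}$ with $J(0)=e_{i}$ and $J(k)=0$, then $\operatorname{Hess}(r_{k})(e_{i},e_{i})|_{o}=-\langle J^{\prime}(0),e_{i}\rangle$; the remaining Hessian entries involving the radial direction $\xi$ vanish because $|\nabla r_{k}|=1$.

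The decisive simplification, already established in the excerpt, is that $R_{\gamma_{\xi}^{\prime}(t)}$ is simultaneously diagonalized by the parallel frame $\{E_{i}(t)\}$ with constant eigenvalues $\lambda_{i}(\xi)$. Writing $J(t)=f_{i}(t)E_{i}(t)$ therefore reduces the Jacobi equation to the scalar ODE $f_{i}^{\prime\prime}=\lambda_{i}(\xi)f_{i}$ with $f_{i}(0)=1$, $f_{i}(k)=0$, whose explicit solution gives
\[
-f_{i}^{\prime}(0)=\sqrt{\lambda_{i}(\xi)}\coth\bigl(\sqrt{\lambda_{i}(\xi)}\,k\bigr),
\]
tending to $\sqrt{\lambda_{i}(\xi)}$ as $k\to\infty$ (with the natural limit when $\lambda_{i}(\xi)=0$). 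Summing over $i$ and invoking the $C^{2}$-convergence of \cite{HI} yields $\Delta b_{\xi}(o)=\sum_{i}\sqrt{\lambda_{i}(\xi)}=l(\xi)$.

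To pass from $o$ to an arbitrary $x\in\widetilde{M}$, I would use the Iwasawa decomposition $G=KAN$ of the isometry group, where $K$ is the isotropy at $o$ and $AN$ may be chosen so as to fix the point at infinity $\xi(\infty)$; the solvable group $AN$ then acts simply transitively on $\widetilde{M}$ by isometries stabilizing $\xi(\infty)$. For any $\phi\in AN$, the function $b_{\xi}\circ\phi$ is again a Busemann function for $\xi(\infty)$ and therefore differs from $b_{\xi}$ only by an additive constant, so $\Delta b_{\xi}\circ\phi=\Delta b_{\xi}$; transitivity then forces $\Delta b_{\xi}\equiv l(\xi)$ everywhere. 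The Jacobi-field computation is essentially mechanical once the ODE has been decoupled; the genuine conceptual input is this last homogeneity step, which is where the symmetric-space hypothesis (beyond the mere parallelism of the curvature tensor) is essential.
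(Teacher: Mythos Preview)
Your argument is correct but proceeds differently from the paper's. The paper computes $\Delta b_{k}$ at an \emph{arbitrary} point $x$: because the curvature tensor is parallel, the Jacobi equation decouples along the geodesic $\sigma_{k}$ from $x$ to $p_{k}$ with (geodesic-invariant) eigenvalues $\lambda_{i}(v_{k})$, where $v_{k}=-\sigma_{k}'(l_{k})$, giving
\[
\Delta b_{k}(x)=\sum_{i}\sqrt{\lambda_{i}(v_{k})}\,\coth\!\bigl(\sqrt{\lambda_{i}(v_{k})}\,l_{k}\bigr).
\]
The remaining work is to show $\lambda_{i}(v_{k})\to\lambda_{i}(\xi)$; the paper does this with a cosine-law estimate forcing the angle between $v_{k}$ and $u_{k}=-\gamma_{\xi}'(k)$ to tend to $0$, then uses transitivity of $G$ to transport both vectors back to $o$ and invokes continuity of the $\lambda_{i}$ on $\mathcal{S}_{o}\widetilde{M}$. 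Your route trades this limiting angle argument for structure theory: you compute only at $o$, where $v_{k}=\xi$ on the nose and no eigenvalue limit is needed, and then use the Iwasawa group $AN\subset P_{\xi}$ (the parabolic fixing $\xi(\infty)$) to propagate the value, via $b_{\xi}\circ\phi=b_{\xi}+\mathrm{const}$ for $\phi\in AN$. Your version is tidier at the Jacobi step but imports more Lie theory (Iwasawa, parabolic stabilizers of boundary points); the paper's version stays closer to elementary comparison geometry, using only homogeneity and continuity at the cost of the angle estimate.
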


\begin{proof}
We fix $x$ and denote $l_{k}=d\left(  x,p_{k}\right)  $. Let $\sigma
_{k}:\left[  0,l_{k}\right]  \rightarrow M$ be the geodesic from $x$ to
$p_{k}$. We write $u_{k}=-\gamma_{\xi}^{\prime}\left(  k\right)
,v_{k}=-\sigma^{\prime}\left(  l_{k}\right)  $. We have
\begin{equation}
\Delta b_{k}\left(  x\right)  =\sum_{i}\sqrt{\lambda_{i}\left(  v_{k}\right)
}\coth\sqrt{\lambda_{i}\left(  v_{k}\right)  }\left(  b_{k}\left(  x\right)
+k\right)  . \label{Dbk}%
\end{equation}
Let $\theta_{k}=\angle(u_{k},v_{k})$ be the angle between $u_{k}$ and $v_{k}$.
By the cosine law,%
\[
\cos\theta_{k}\geq\frac{k^{2}+l_{k}^{2}-d\left(  o,x\right)  ^{2}}{2kl_{k}}.
\]
As $\left\vert l_{k}-k\right\vert \leq d\left(  o,x\right)  $, it is obvious
that $\theta_{k}\rightarrow0$ as $k\rightarrow\infty$. For each $k$, there
exists $\phi_{k}\in G$ s.t. $\phi_{k}\left(  p_{k}\right)  =o$. Let
$\widetilde{u}_{k}=\phi_{\ast}\left(  u_{k}\right)  ,\widetilde{v}_{k}%
=\phi_{\ast}\left(  v_{k}\right)  $. They are unit vectors at $o$ and
\[
\angle(\widetilde{u}_{k},\widetilde{v}_{k})=\angle(u_{k},v_{k})\rightarrow0
\]
as $k\rightarrow\infty$. By continuity, for each $i$ we have%
\[
\left\vert \lambda_{i}\left(  \widetilde{u}_{k}\right)  -\lambda_{i}\left(
\widetilde{v}_{k}\right)  \right\vert \rightarrow0
\]
as $k\rightarrow\infty$. Since $\lambda_{i}\left(  \widetilde{u}_{k}\right)
=\lambda_{i}\left(  u_{k}\right)  =\lambda_{i}\left(  \xi\right)  $ and
$\lambda_{i}\left(  \widetilde{v}_{k}\right)  =\lambda_{i}\left(
v_{k}\right)  $, we have $\lambda_{i}\left(  v_{k}\right)  \rightarrow
\lambda_{i}\left(  \xi\right)  $. As $b_{k}\rightarrow b_{\xi}$ in
$C_{loc}^{2}$, we obtain from (\ref{Dbk}) by taking limit%
\[
\Delta b_{\xi}\left(  x\right)  =\sum_{i}\sqrt{\lambda_{i}\left(  \xi\right)
}.
\]

\end{proof}

Let $G=I_{0}(\widetilde{M})\mathfrak{\ }$be the connected component of the
isometry group containing the identity and $K=\left\{  \phi\in G:\phi
o=o\right\}  $. Thus $G$ is a semisimple Lie group acting transitively on
$\widetilde{M}$ by isometries and $K$ a maximal compact subgroup of $G$. Define

$\mathfrak{\ \ }$%
\begin{align*}
\mathfrak{g}  &  =\{\text{Killing vector fields on }M\},\\
\mathfrak{t}  &  =\{X\in\mathfrak{g}:X(o)=0\},\\
\mathfrak{\ \ p}  &  =\{X\in\mathfrak{g}:\nabla X(o)=0\}.
\end{align*}
We know that $\mathfrak{g}$ is the Lie algebra of $G\mathfrak{\ }$and
$\mathfrak{t}$ is the Lie algebra of $K$. Moreover
\[
\mathfrak{g}=\mathfrak{t}\oplus\mathfrak{p}%
\]
and $\mathfrak{p}$ is naturally identified with $T_{o}\widetilde{M}$. Let
$\sigma:\mathfrak{g}\rightarrow\mathfrak{g}$ be the Cartan involution, i.e. it
is the automorphism s.t. $\sigma|_{\mathfrak{t}}=I,\sigma|_{\mathfrak{p}}=-I$.
Let $B$ be the Killing form $\mathfrak{g}$, i.e. for $X,Y\in\mathfrak{g}$%
\[
B\left(  X,Y\right)  =\mathrm{tr}\left(  \mathrm{ad}_{X}\mathrm{ad}%
_{Y}\right)  .
\]
Since $\mathrm{ad}_{\sigma\left(  X\right)  }=\sigma\circ\mathrm{ad}_{X}%
\circ\sigma^{-1}$, $B$ is invariant under $\sigma$. In particular
$\mathfrak{t}$ and $\mathfrak{p}$ are orthogonal to each other w.r.t. the
Killing form of $\mathfrak{g}$, i.e. $B\left(  X,Y\right)  =0$ for any
$X\in\mathfrak{t},Y\in\mathfrak{p}$. Moreover, $B$ is negative definite on
$\mathfrak{t}$ and positive definite on $\mathfrak{p}$. Thus
\[
\left\langle X,Y\right\rangle =-B\left(  \sigma X,Y\right)
\]
is a metric on $\mathfrak{g}$. It is easy to verify that $\mathfrak{t}$ and
$\mathfrak{p}$ are still orthogonal and the $\mathrm{ad}$-action is
skew-symmetric w.r.t. this metric. To fix the scale, we assume that the
Riemannian metric on $T_{o}\widetilde{M}=\mathfrak{p}$ coincides with the
restriction of $\left\langle \cdot,\cdot\right\rangle $ on $\mathfrak{p}$.

\bigskip Let $\mathfrak{a}\subset\mathfrak{p}$ be a maximal abelian subspace.
For any $\alpha\in$ $\mathfrak{a}^{\ast}$ we define%

\[
\mathfrak{g}_{\alpha}=\{X\in\mathfrak{g}:\mathrm{ad}_{H}X=\alpha(H)X\text{
\ for all }H\in\mathfrak{a}\}.
\]
If $\mathfrak{\mathfrak{g}_{\alpha}}\neq0$, then $\alpha$ is called a root of
$\mathfrak{a}$ and $m_{\alpha}:=\dim\mathfrak{\mathfrak{g}_{\alpha}}$ is
called its multiplicity. The set of all nonzero roots is denoted by $\Delta$.
If $\alpha\in\Delta$, then $-\alpha\in\Delta$. Moreover, $\sigma$ defines an
isomorphism from $\mathfrak{g}_{\alpha}$ onto $\mathfrak{g}_{-\alpha}$. The
connected components of $\mathfrak{a}\backslash\cup_{\alpha\in\Delta}%
\ker\alpha$ are the Weyl chambers of $\mathfrak{a}$. Pick one of them to be
the positive Weyl chamber and denote it by $\mathfrak{a}^{+}$. A root is
positive if it is positive on $\mathfrak{a}^{+}$. Let $\Delta^{+}$ denote the
set of positive roots. We have the following orthogonal decomposition%

\[
\mathfrak{g}=\mathfrak{g}_{0}+\sum_{\alpha\in\Delta}\mathfrak{g}_{\alpha}.
\]
By definition we have $\mathfrak{a}\subset\mathfrak{g}_{0}$. In fact
$\mathfrak{g}_{0}=\mathfrak{g}_{0}\cap\mathfrak{t}\oplus\mathfrak{g}_{0}%
\cap\mathfrak{p}$. Since $\mathfrak{a}$ is maximal, $\mathfrak{a}%
=\mathfrak{g}_{0}\cap\mathfrak{p}$. Moreover $\left[  \mathfrak{g}_{\alpha
},\mathfrak{g}_{\beta}\right]  \subset\mathfrak{g}_{\alpha+\beta}$.

It is known that for any two maximal abelian subspaces $\mathfrak{a}%
,\widetilde{\mathfrak{a}}\subset\mathfrak{p}$ and Weyl chambers $\mathfrak{a}%
^{+}\subset\mathfrak{a},\widetilde{\mathfrak{a}}^{+}\subset\widetilde
{\mathfrak{a}}$ there exists $\phi\in K$ s.t. $\phi$ maps $\mathfrak{a}$ to
$\widetilde{\mathfrak{a}}$ and $\mathfrak{a}^{+}$ to $\widetilde{\mathfrak{a}%
}^{+}$. Therefore, we may assume $\xi\in\mathfrak{a}^{+}$. Consider the linear
map $T=\mathrm{ad}_{\xi}:\mathfrak{t}\rightarrow\mathfrak{p}$. and its adjoint
$T^{\ast}=\mathrm{ad}_{\xi}:\mathfrak{p\rightarrow t}$. Indeed, for
$u\in\mathfrak{t},v\in\mathfrak{p}$%
\begin{align*}
\left\langle Tu,v\right\rangle  &  =B\left(  \mathrm{ad}_{\xi}u,v\right)  \\
&  =-B\left(  u,\mathrm{ad}_{\xi}v\right)  \\
&  =\left\langle u,\mathrm{ad}_{\xi}v\right\rangle .
\end{align*}
How do we calculate those eigenvalues $\lambda_{i}\left(  \xi\right)  $?
Recall that they are the eigenvalues of the curvature operator $R_{\xi
}:\mathfrak{p}\rightarrow\mathfrak{p}$ defined by $R_{\xi}v=-R\left(
\xi,v\right)  \xi$. It is a basic formula for symmetric spaces that $R_{\xi
}v=-\left[  \left[  \xi,v\right]  ,\xi\right]  =\mathrm{ad}_{\xi}%
\mathrm{ad}_{\xi}v=TT^{\ast}v$. The curvature operator naturally extends to an
endomorphism on $\mathfrak{g}=\mathfrak{t}\oplus\mathfrak{p}$, to be denoted
by the same symbol $R_{\xi}$. In terms of the decomposition it is given by%
\[
\left[
\begin{array}
[c]{cc}%
0 & T\\
T^{\ast} & 0
\end{array}
\right]  ^{2}=\left[
\begin{array}
[c]{cc}%
TT^{\ast} & 0\\
0 & T^{\ast}T
\end{array}
\right]  .
\]
Therefore
\[
\sum_{i}\sqrt{\lambda_{i}\left(  \xi\right)  }=\mathrm{tr}\sqrt{R_{\xi}%
}|_{\mathfrak{p}}=\frac{1}{2}\mathrm{tr}\sqrt{R_{\xi}}.
\]
Then for any $v\in\mathfrak{g}$ we have decomposition%
\[
v=v_{0}+\sum_{\alpha\in\Delta}v_{\alpha}%
\]
Thus
\begin{align*}
R_{\xi}v &  =\mathrm{ad}_{\xi}\mathrm{ad}_{\xi}v\\
&  =\mathrm{ad}_{\xi}\left(  \sum_{\alpha}\alpha(\xi)v_{\alpha}\right)  \\
&  =\sum_{\alpha}\alpha(\xi)^{2}v_{\alpha}.
\end{align*}
Therefore%
\begin{align*}
\sum_{i}\sqrt{\lambda_{i}\left(  \xi\right)  } &  =\frac{1}{2}\mathrm{tr}%
\sqrt{R_{\xi}}\\
&  =\frac{1}{2}\sum_{\alpha\in\Delta}\left\vert \alpha(\xi)\right\vert
m_{\alpha}\\
&  =\sum_{\alpha\in\Delta^{+}}\alpha(\xi)m_{\alpha}.
\end{align*}
Let $e_{\alpha}\in\mathfrak{a}$ be the vector s.t. $\alpha\left(  X\right)
=\left\langle e_{\alpha},X\right\rangle $. Then%
\[
l\left(  \xi\right)  =\sum_{i}\sqrt{\lambda_{i}\left(  \xi\right)
}=\left\langle \xi,H\right\rangle ,
\]
where $H=\sum_{\alpha\in\Delta^{+}}m_{\alpha}e_{\alpha}\in\mathfrak{a}^{+}$.

\begin{lemma}
\label{low}We have
\[
I\left(  \widetilde{M}\right)  \geq\left\vert H\right\vert .
\]

\end{lemma}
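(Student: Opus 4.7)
The plan is to use the previous lemma, which gives $\Delta b_\xi \equiv l(\xi)$, together with the divergence theorem to convert this into an isoperimetric bound $I(\widetilde{M}) \geq l(\xi)$, and then to maximize over $\xi$ using the identification $l(\xi) = \langle \xi, H \rangle$ already established.

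First, for any $\xi \in \mathcal{S}_o\widetilde{M}$ and any precompact open $\Omega \subset \widetilde{M}$ with smooth boundary, I would apply the divergence theorem to $\nabla b_\xi$ to obtain
\[
l(\xi)\, V(\Omega) \;=\; \int_\Omega \Delta b_\xi \, dV \;=\; \int_{\partial\Omega} \langle \nabla b_\xi, \nu \rangle \, dA \;\leq\; A(\partial\Omega),
\]
using the standard fact that $|\nabla b_\xi| \equiv 1$ in the last inequality. Taking the infimum over $\Omega$ then yields
\[
I(\widetilde{M}) \;\geq\; l(\xi) \qquad \text{for every } \xi \in \mathcal{S}_o\widetilde{M}.
\]

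Next, I would optimize this pointwise bound over $\xi$. The $K$-invariance of $l$, together with the fact that every unit vector in $\mathfrak{p}$ is $K$-conjugate to one in $\overline{\mathfrak{a}^+}$, reduces the problem to $\xi \in \overline{\mathfrak{a}^+}$ with $|\xi|=1$. For such $\xi$ the computation preceding the statement gives $l(\xi) = \langle \xi, H \rangle$, and Cauchy--Schwarz furnishes $\langle \xi, H \rangle \leq |H|$ with equality at $\xi_0 := H/|H|$. Since $H \in \mathfrak{a}^+$, the unit vector $\xi_0$ lies in $\overline{\mathfrak{a}^+}$, so the identity $l(\xi_0) = |H|$ is applicable and the claimed inequality follows.

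I do not anticipate a serious obstacle here, as the substantive work (the Laplacian identity for $b_\xi$ and the root-theoretic formula for $l(\xi)$) is already in place. The two small points worth double-checking are that $b_\xi$ is $C^2$ with unit gradient on all of $\widetilde{M}$ (guaranteed by the Heintze--Im Hof convergence cited earlier), so that the divergence theorem applies without boundary or regularity issues, and that the optimizer $H/|H|$ indeed belongs to the admissible Weyl chamber, which is immediate from $H\in\mathfrak{a}^+$.
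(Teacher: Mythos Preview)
Your proposal is correct and follows essentially the same argument as the paper: apply the divergence theorem to $\nabla b_\xi$, use $|\nabla b_\xi|\equiv 1$ to bound the boundary integral by the area, and then take the supremum of $l(\xi)=\langle \xi,H\rangle$ over unit $\xi$ in the closed Weyl chamber. Your additional remarks on $K$-invariance, Cauchy--Schwarz, and the location of the optimizer $H/|H|$ only make explicit what the paper's ``taking sup over $\xi$'' leaves implicit.
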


\begin{proof}
For each $\xi\in\mathfrak{a}^{+}$, the corresponding Busemann function
$b_{\xi}$ satisfies $\Delta b_{\xi}=\left\langle \xi,H\right\rangle $. Then
for any open submanifold $\Omega\subset\widetilde{M}$ with compact closure and
smooth boundary, integrating over $\Omega$ yields
\begin{align*}
\left\langle \xi,H\right\rangle V\left(  \Omega\right)   &  =\int_{\Omega
}\Delta b_{\xi}dv\\
&  =\int_{\partial\Omega}\left\langle \nabla b_{\xi},\nu\right\rangle d\sigma,
\end{align*}
where $\nu$ is the outer unit normal of $\partial\Omega$. Since $\left\vert
\nabla b_{\xi}\right\vert \equiv1$, we obtain%
\[
\left\langle \xi,H\right\rangle V\left(  \Omega\right)  \leq A\left(
\partial\Omega\right)  .
\]
Therefore, for any $\xi\in\mathfrak{a}^{+}$
\[
I\left(  \widetilde{M}\right)  \geq\left\langle \xi,H\right\rangle .
\]
Taking sup over $\xi$ yields $I\left(  \widetilde{M}\right)  \geq\left\vert
H\right\vert .$
\end{proof}

\bigskip

\bigskip We claim that equality holds: $I\left(  \widetilde{M}\right)
=\left\vert H\right\vert $. For this purpose we need to bring in another
geometric invariant. Since $\widetilde{M}$ admits compact quotients by
discrete subgroups of $G$, the following limit, called the volume entropy,
\[
v=\lim_{r\rightarrow\infty}\frac{\log V\left(  r\right)  }{r},
\]
where $V\left(  r\right)  $ is the volume of the geodesic ball $B\left(
o,r\right)  $ with center $o$ and radius $r$, exists and is independent of the
base point $o$ (cf. \cite{M}). This asymptotic invariant is computed
explicitly in \cite[Appendix C]{BCG} for locally symmetric spaces of
noncompact type. Indeed, the volume form on $\widetilde{M}$ is given by
$\prod\limits_{i}\frac{\sinh\sqrt{\lambda_{i}\left(  \xi\right)  }t}%
{\sqrt{\lambda_{i}\left(  \xi\right)  }}d\sigma_{\xi}dt$ in view of
(\ref{Jacobi}). Therefore
\[
V\left(  r\right)  =\int_{0}^{r}\int_{\mathbb{S}^{m-1}}\prod\limits_{i}%
\frac{\sinh\sqrt{\lambda_{i}\left(  \xi\right)  }t}{\sqrt{\lambda_{i}\left(
\xi\right)  }}d\sigma_{\xi}dt.
\]
From this formula it is easy to derive%
\[
v=\sup_{\xi\in\mathfrak{a}^{+}}\sum_{i}\sqrt{\lambda_{i}\left(  \xi\right)
}=\left\vert H\right\vert .
\]
Now we can prove our main result.

\begin{theorem}
\label{main}Let $\widetilde{M}$ be a symmetric space of noncompact type. Then
we have
\[
I=v=\left\vert H\right\vert .
\]

\end{theorem}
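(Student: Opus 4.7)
\bigskip\textbf{Proof plan.} Most of the work has already been assembled in the preceding lemmas and computations: Lemma~\ref{low} gives the lower bound $I(\widetilde{M}) \geq |H|$ by integrating the constant Laplacian of the Busemann function, and the Jacobi field formula \eqref{Jacobi} combined with the root-space calculation of $\sum_i\sqrt{\lambda_i(\xi)} = \langle\xi, H\rangle$ yields $v = \sup_{\xi\in\mathfrak{a}^+}\sum_i\sqrt{\lambda_i(\xi)} = |H|$. Consequently, to prove the theorem I only need to establish the general upper bound $I(\widetilde{M})\leq v$, for then $|H|\leq I(\widetilde{M})\leq v = |H|$ forces equality throughout.

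To prove $I\leq v$, the natural strategy is to test the Cheeger quotient on the exhaustion by metric balls $B(o,r)$. Because $\widetilde{M}$ is Cartan--Hadamard, the exponential map at $o$ is a global diffeomorphism, so every geodesic sphere $\partial B(o,r)$ is smooth and $B(o,r)$ is admissible in the definition of $I$. Writing $V(r)$ and $A(r)$ for the volume and boundary area, the coarea formula gives $V'(r)=A(r)$, so
$$\frac{A(r)}{V(r)} = \frac{d}{dr}\log V(r).$$
I would then apply the mean value theorem to $\log V$ on the interval $[r,2r]$ to produce $s_r\in(r,2r)$ with
$$\frac{A(s_r)}{V(s_r)} = \frac{\log V(2r)-\log V(r)}{r}.$$
By the definition of the volume entropy the right-hand side tends to $2v-v = v$ as $r\to\infty$, so the balls $B(o,s_r)$ provide admissible competitors whose Cheeger quotients converge to $v$, forcing $I(\widetilde{M})\leq v$.

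There is no serious obstacle at this concluding stage. The geometric heart of the theorem resides in Lemma~\ref{low}, which exploited the Jacobi-field calculation showing that the Busemann function of $\xi\in\mathfrak{a}^+$ has constant Laplacian $\langle\xi,H\rangle$, and in the entropy computation, which hinged on diagonalizing $R_\xi$ via the root space decomposition. Once those are in hand, the theorem is just the sandwich between the Busemann-function lower bound and the ball-exhaustion upper bound, using only the general fact that $I\leq v$ holds whenever geodesic spheres are smooth exhausting hypersurfaces, as is automatic in the Cartan--Hadamard setting.
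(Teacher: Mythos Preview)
Your proof is correct and follows essentially the same route as the paper: invoke Lemma~\ref{low} and the entropy computation $v=|H|$, then test the Cheeger quotient on geodesic balls to obtain $I\le v$. The paper's version of that last step is marginally more direct---it simply integrates the differential inequality $V'(r)=A(r)\ge I\,V(r)$ to get $V(r)\ge V(1)e^{I(r-1)}$ and hence $v\ge I$---but your mean-value-theorem argument on $\log V$ over $[r,2r]$ is an equally valid way to extract the same conclusion.
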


\begin{proof}
Given Lemma \ref{low}, it remains to prove $I\leq v$. This is well known.
Indeed, by the definition of $I$ we have%
\[
V^{\prime}\left(  r\right)  =A\left(  r\right)  \geq IV\left(  r\right)  ,
\]
where $A\left(  r\right)  $ is the area of the surface of the geodesic ball
$B\left(  o,r\right)  $. Integrating gives $V\left(  r\right)  \geq V\left(
1\right)  \exp\left(  Ir\right)  $. It follows that $v\geq I$.
\end{proof}

As a corollary we get the following result originally proved by
Olshanetski\cite{O} (see the discussion in \cite[Appendix C]{BCG}).

\begin{theorem}
Let $\widetilde{M}$ be a symmetric space of noncompact type. Then its bottom
of spectrum $\lambda_{0}\left(  \widetilde{M}\right)  $ is given by the
formula%
\[
\lambda_{0}\left(  \widetilde{M}\right)  =\frac{1}{4}\left\vert H\right\vert
^{2}.
\]

\end{theorem}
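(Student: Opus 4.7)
The plan is to sandwich $\lambda_0(\widetilde M)$ between $|H|^2/4$ from above and below, each time feeding Theorem~\ref{main} into a classical inequality. The lower half is immediate: by Cheeger's inequality \eqref{Cheeger} and the identification $I(\widetilde M) = |H|$ from Theorem~\ref{main},
\[
\lambda_0(\widetilde M) \geq \tfrac{1}{4}\,I(\widetilde M)^2 = \tfrac{1}{4}|H|^2.
\]

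For the matching upper bound $\lambda_0(\widetilde M)\leq\tfrac{1}{4}|H|^2$ my plan is a Brooks-type test-function argument, leveraging the other identification $v=|H|$ from Theorem~\ref{main}. Fix $s>|H|/2$ and a smooth cutoff $\eta\colon[0,\infty)\to[0,1]$ equal to $1$ on $[0,1]$, vanishing on $[2,\infty)$, with $|\eta'|\leq 2$. Plug the compactly supported function
\[
f_R(x) = \eta\bigl(d(x,o)/R\bigr)\,e^{-s\,d(x,o)}
\]
into \eqref{lam0}. Because $\widetilde M$ is Cartan--Hadamard, $r=d(\cdot,o)$ is smooth off $o$ with $|\nabla r|=1$, so the polar decomposition $dv = A(r)\,dr\,d\sigma_\xi$ reduces the Rayleigh quotient to one-variable integrals against $e^{-2sr}A(r)$. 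Expanding $|\nabla f_R|^2$ yields a main term $s^2 f_R^2$ together with cross and cutoff terms supported on $R\leq r\leq 2R$ and carrying a factor $O(1/R)$.

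The key input is that $2s>v=|H|$ forces $\int_0^\infty e^{-2sr}A(r)\,dr<\infty$, so the error terms vanish as $R\to\infty$ by dominated convergence, giving Rayleigh quotient $\to s^2$ and therefore $\lambda_0(\widetilde M)\leq s^2$. Letting $s\searrow|H|/2$ yields the desired upper bound. The only delicate point is the cutoff bookkeeping in the polar-coordinate computation, but once the integrability $2s>v$ is in hand it is routine; combining the two bounds produces the claimed identity $\lambda_0(\widetilde M)=\tfrac{1}{4}|H|^2$.
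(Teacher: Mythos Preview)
Your proposal is correct and follows essentially the same route as the paper: both sandwich $\lambda_0$ between $\tfrac14 I^2$ (Cheeger) and $\tfrac14 v^2$ (test functions of the form $e^{-sr}$ with $2s>v$), then invoke Theorem~\ref{main} to collapse the sandwich. The paper merely asserts the upper bound ``follows easily'' from such test functions, while you spell out the cutoff and dominated-convergence bookkeeping; the strategies are the same.
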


\begin{proof}
We recall another fundamental inequality%
\[
\lambda_{0}\left(  \widetilde{M}\right)  \leq\frac{1}{4}v^{2}.
\]
This follows easily by taking test functions $f=\exp\left[  (v+\varepsilon
)r/2\right]  $, with $r$ being the distance function to $o$ and $\varepsilon
>0$, in (\ref{lam0}) and then letting $\varepsilon\rightarrow0$. Combining the
above inequality and (\ref{Cheeger}) we can write%
\[
\frac{1}{4}I^{2}\leq\lambda_{0}\left(  \widetilde{M}\right)  \leq\frac{1}%
{4}v^{2}.
\]
From this we obtain the desired identity from Theorem \ref{main}.
\end{proof}

\bigskip

\end{document}